\documentclass[12pt]{amsart}
\DeclareUnicodeCharacter{1D43B}{H}
\DeclareUnicodeCharacter{1D43F}{L}
\DeclareUnicodeCharacter{1D44A}{W}

\usepackage{graphicx}
\usepackage{subfig}
\usepackage{tabularx}

\usepackage{mathrsfs}     
\usepackage{helvet}         
\usepackage{courier}        
\usepackage{type1cm}      
\usepackage{color}
\usepackage{todonotes}

\usepackage{geometry,calc,color}
\usepackage{amsmath,amssymb}

\usepackage{enumerate}
\usepackage{esint}

\setlength{\topmargin}{-0.25in}
\setlength{\textheight}{8.9in}
\setlength{\oddsidemargin}{0.0in}
\setlength{\evensidemargin}{0.0in}

\setlength{\textwidth}{6.5in}

\vfuzz2pt 
\hfuzz2pt 
\newtheorem{theorem}{Theorem}[section]
\newtheorem{corollary}[theorem]{Corollary}
\newtheorem{lemma}[theorem]{Lemma}
\newtheorem{proposition}[theorem]{Proposition}
\newtheorem{comment}[theorem]{Comment}
\theoremstyle{remark}
\newtheorem{remark}[theorem]{Remark}

\newcommand{\diam}[1]{\mathrm{diam}(#1)}
\numberwithin{equation}{section}



\long\def\NOTE#1{} 

\vfuzz2pt 
\hfuzz2pt 

\newcommand{\G}{\Gamma}
\newcommand{\y}{{\mathbf{y}}}
\newcommand{\x}{{\mathbf{x}}}
\newcommand{\patch}[1]{{\tau_{#1}}}
\newcommand{\Nodes}{\mathcal{N}}

\renewcommand{\b}[1]{\phi_{#1}}

\newcommand{\Th}{\mathcal{T}_h}
\newcommand{\phistar}[1]{\phi^*_{#1}}

\newcommand{\supp}{\mathop{\mathrm{supp}}}
\newcommand{\dist}{\mathrm{d}}

\newcommand{\tH}{\widetilde H}

\renewcommand{\phi}{\varphi}

\renewcommand{\O}{\Omega}
\newcommand{\Ext}{\mathscr{E}}


\newcommand{\Span}{\mathrm{span}\,}

\newcommand{\hB}{\widehat B}
\newcommand{\htau}{\widehat\tau}
\newcommand{\hy}{\widehat y}
\newcommand{\RR}{\mathbb{R}}




%

\subjclass{46E35,65N99}

\begin{document}

\title[Localization of trace norms]{Localization of trace norms in two and three dimensions}%

\author[S. Bertoluzza]{Silvia Bertoluzza}
\address{IMATI ``E. Magenes'', CNR, Pavia (Italy)}%
\email{silvia.bertoluzza@imati.cnr.it}%

%

\date{\today}
\thanks{This work was cofunded by MIUR Progetti di Ricerca di Rilevante Interesse Nazionale (PRIN) Bando 2020 (grant 20204LN5N5). The author is a member of the INdAM Research group GNCS}%
\subjclass{}%
\keywords{}%

\begin{abstract} 
		We extend a localization result for the $H^{1/2}$ norm by B. Faermann \cite{faermann2000localization,faermann2002localization}, to a wider class of  subspaces of $H^{1/2}(\G)$, and we prove an analogous result for the  $H^{-1/2}(\G)$ norm, $\G$ being the boundary of a bounded polytopal domain $\Omega$ in $\mathbb{R}^n$, $n=2,3$. As a corollary, we obtain equivalent, better localized, norms for both $H^{1/2}(\G)$ and $H^{-1/2}(\G)$, which can be exploited, for instance, in the design of preconditioners or of stabilized methods.
\end{abstract}
\maketitle


\date{\today}

\maketitle
\section{Introduction}
The spaces $H^{1/2}(\G)$ and $H^{-1/2}(\G)$, $\G$ being the boundary of a domain $\O$ in $\mathbb{R}^n$, naturally appear in the analysis and in the discretization of second order elliptic boundary value problems. Indeed, they  
are the natural spaces for the trace of, respectively,  the $H^1(\O)$ solution and  its normal flux. The capability of numerically evaluating such norms in an efficient way would be beneficial for several aspects of the numerical solution of such a class of problems, from preconditioning, to stabilization, to a posteriori error estimation in adaptivity. Unfortunately, the scalar product for such spaces  is of an intrinsecally global nature: the supports of two elements being well separated does not imply that their scalar product is zero or even only small. This makes computations involving the evaluation of such  norms and/or of the corresponding scalar products computationally heavy, and often inpractical. In particular situations, it is however possible to localize the $H^{1/2}(\G)$ norm:  the works of Faermann \cite{faermann2000localization,faermann2002localization} 
show that, if we restrict ourselves to the subspace of functions that are orthogonal, in $L^2(\G)$, to suitable finite element spaces, the $H^{1/2}(\G )$ norm is equivalent to a norm with good localization properties,  obtained as the squared root of the sum of squared local norms on overlapping patches. The aim of this paper is, on the one hand, to extend such  results to a larger set of subspaces of $H^{1/2}(\G)$ and, on the other hand, to prove an analogous result for the norm of the dual space $H^{-1/2}(\G)$.

\section{Functional framework}

Let $\O$ denote a bounded, simply connected, possibly curvilinear polygon  in $\mathbb{R}^n$, $n=2,3$, with Lipschitz boundary $\G = \partial\Omega$. We assume that $\O$ in non degenerate, in the sense that  the geodesic distance on $\G$ is bounded from above by a constant times the euclidean distance in $\mathbb{R}^n$.

 We denote by $\langle \cdot, \cdot \rangle_\tau$, $\tau \subseteq \G$ being either $\Gamma$ or a simply connected subdomain of $\Gamma$, the $L^2(\tau)$ scalar product:
\[
\langle v, w \rangle_{\tau} = \int_\tau v(s) w (s)\,ds.
\]
 We recall that the norm for the space $H^{1/2}(\tau)$  can be defined as
\[
\| v \|^2_{1/2,\tau} = \| v \|_{0,\tau}^2 + | v |_{1/2,\tau}^2\quad \text{ with } \quad | v |_{1/2,\tau}^2 = \int_\tau \,dt \int_\tau \,ds \frac{| v(t) - v(s) |^2}{| t - s |^n}.
\]
We also recall that the space $H^{1/2}_{00}(\tau)$ can be defined as the subspace of those functions $v$ in  $H^{1/2}(\tau)$ such that $\Ext(v)$ belongs to $H^{1/2}(\Gamma)$, $\Ext(v)\in L^2(\G)$ being the function coinciding with  $v$ in $\tau$ and identically vanishing in $\Gamma\setminus\tau$. The $H^{1/2}_{00}(\tau)$ norm is defined as\footnote{Here and thorughout the paper, $A \lesssim B$ (resp. $A\gtrsim B$) stands for $A \leq C B$, for some positive constant $C$, which, when a mesh $\Th$ is involved in the definition of the quantities $A$ and $B$, is assumed to be independent of its meshsize $h$. The expression $A \simeq B$ stands for $A \lesssim B \lesssim A$.
}
\begin{equation}\label{onehalfzznorm}
	\| v \|^2_{H^{1/2}_{00}(\tau)} = | \Ext(v) |^2_{1/2,\G} \simeq | v |^2_{1/2,\tau} + \diam{\tau}^{n-2}\int_\tau \frac{ | v(s) |^2}{\dist(s,\partial\tau)^{n-1}}\,ds.
\end{equation}
By abuse of notation, here and in the following, for all $v \in H^{1/2}_{00}(\tau)$ we will denote its extension by the same letter $v$ instead of using the heavier notation $\Ext(v)$.

We let $H^{-1/2}(\tau)=(H^{1/2}(\tau))'$ and, for $\tau$ simply connected subdomain of $\Gamma$, $\tH^{-1/2}(\tau) = (H^{1/2}_{00}(\tau))'$, denote the dual spaces of $H^{1/2}(\tau)$ and $H^{1/2}_{00}(\tau)$ respectively. Letting, by abuse of notation $\langle \cdot,\cdot \rangle_\tau$ denote both the duality pairing of $H^{-1/2}(\tau)$ and $H^{1/2}(\tau)$ and of $\tH^{-1/2}(\tau)$ and $H^{1/2}_{00}(\tau)$, the dual spaces are respectively endowed   with the norms
\[
\| \phi \|_{-1/2,\tau} = \sup_{{v\in H^{1/2}(\tau)}\atop {v \not=0}} \frac{\langle \phi, v \rangle_\tau}{\| v \|_{1/2,\tau}}, \qquad \| \phi \|_{\tH^{-1/2}(\tau)} = \sup_{{\phi\in H^{1/2}_{00}(\tau)}\atop{\phi\not=0} } \frac{\langle \phi, v \rangle_\tau}{\| v \|_{H^{1/2}_{00}(\tau)}}.
\]

In the following, we will make use of the injection bound of $L^2(\tau)$ in $L^1(\tau)$, that reads
\begin{equation}\label{2.2a}
\| v \|_{L^1(\tau)} \leq | \tau |^{1/2} \| v \|_{0,\tau}, \qquad \text{ for all } v \in L^2(\tau),
\end{equation}
$| \tau|$  denoting the measure of $\tau$.
Moreover, we will need the following Poincar\'e type inequalities:
\begin{gather}\label{poincare}
\| v \|_{0,\tau} \lesssim \diam\tau^{1/2} \| v \|_{H^{1/2}_{00}(\tau)}, \qquad \text{ for all } v \in H^{1/2}_{00}(\tau),\\
\| v \|_{0,\tau} \lesssim \diam\tau^{1/2} | v |_{1/2,\tau}, \qquad \text{ for all } v \in H^{1/2}(\tau) \quad \text{ with }\int_\tau v(s)\,ds = 0,\label{poincareavfree}
\end{gather}
the implicit constant in \eqref{poincare} being independent of the shape of $\tau$, the one  in \eqref{poincareavfree} depending on the ratio between the diameter of $\tau$ and the diamater of the largest ball inscribed in $\tau$.

\NOTE{Per far tornare tutto, bisogna che la parte di bordo della norma $H^{1/2}_{00}
$ sia scalata con $h^{n-2}$. Potrebbe essere giusto, verificare.
\[
\int_\Gamma \,dx \int_\Gamma \, dy \cdots =
\int_\tau \int_\tau \cdots + 2 \int_\tau \, dx \int_{\G\setminus\tau}\,dy \frac {| v(x) |}
{|x - y|^n} = \int_{\tau} \,dx | v(x) |^2 \int_{\G \setminus \tau} \,dy \frac1{| x - y |^n}
\]
How do I bound 
\[
\int_{\G \setminus \tau} \,dy \frac1{| x - y |^n}
\]

I take two balls centered in $x$ such that 
\[
B_0(x) \subseteq \tau \subseteq B_1(x)
\]
I have that the radius of $B_0(x)$ is $\sim \dist(x,\partial\tau)$ and the radius of $B_1$ is $\sim h = \text{diam}(\tau)$. I do a change of variable bringing $B_1$ in $\hB$: 
$x = 0$ (so that is unchanged), $\hy = y/h$, $d\hy = dy h^{1-n}$, $\dist(0,\partial\htau) = \dist(0,\partial\tau)/h$. 
I have
\[
\int_{\RR^n \setminus \tau} \frac 1 {| y |^n} \,dy =
	\int_{\RR^n \setminus \htau} \frac 1 {| h \hy |^n} h^{n-1 }d \hy = h^{-1} \int_{\RR^n \setminus \htau} \frac 1 {|\hy|^n} d \hy \sim h^{-1} \frac 1 {\dist(0,\partial \htau)^{n-1}} \sim \frac {h^{n-2}} {\dist{(0,\partial \tau)}}
\]
}

\NOTE{
The best constants in the Poincar\'e inequality may depend on the shape of the domain, but they are uniformly bounded by a universal constant -- Need to check this.
}

\section{Localization of the $H^{1/2}(\G)$ norm}

 Let $\Th$ denote a quasi uniform, shape regular decomposition of $\G = \partial \O$ into (possibly curvilinear) segments/triangle, of mesh size $h$,  $\Nodes$ denoting the corresponding set of nodes. For each node $\y \in \Nodes$ we let \[
\patch{\y} = \cup_{T\in \Th:\ \y \in \partial T}  T \] denote the patch of elements sharing $\y$ as a vertex. 
Let now $v \in H^{1/2}(\G)$, and assume that $v = \sum_{\y\in \Nodes} v^\y$, with $v^\y \in  H^{1/2}_{00}(\patch{\y})$ for all $\y$.
By the definition of the $H^{1/2}_{00}(\patch{\y})$ norm, we immediately see that 
\begin{equation}\label{nonoptimal}
| v |_{1/2,\G} = | 	\sum_{\y \in \Nodes}  v^\y |_{1/2,\G}  \leq 
\sum_{\y \in \Nodes} | v^\y |_{1/2,\G} =
\sum_{\y\in \Nodes} \| v^{\y} \|_{H^{1/2}_{00}(\patch{\y})}.
\end{equation}
However, a stronger bound holds, as stated by the following lemma. 
\begin{lemma}\label{lem:locposnew}
Assume that $v \in H^{1/2}(\G)$ satisfies
	\begin{equation}\label{decomposition}
	v =  \sum_{\y\in \Nodes} v^\y, \qquad \text{ with,\quad for all }\ \y\in \Nodes, \quad v^{\y} \in H^{1/2}_{00} (\patch{\y})
	.\end{equation}
	Then we have
	\begin{equation}\label{boundonehalfnew}
	\vert v \vert_{1/2,\G} \lesssim \sqrt{\sum_{\y\in \Nodes} \| v^{\y} \|^2_{H^{1/2}_{00}(\patch{\y})}}.
	\end{equation}
\end{lemma}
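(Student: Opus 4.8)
The plan is to work directly with the square of the seminorm and to exploit the near-orthogonality of the pieces $v^\y$ coming from the locality of their supports. Writing
\[
a(u,w) = \int_\G\int_\G \frac{(u(t)-u(s))(w(t)-w(s))}{|t-s|^n}\,dt\,ds
\]
for the bilinear form associated with $|\cdot|_{1/2,\G}^2$, I would expand
\[
|v|_{1/2,\G}^2 = \sum_{\y,\mathbf z\in\Nodes} a(v^\y, v^{\mathbf z}),
\]
and estimate each interaction so as to show that the associated matrix acts boundedly on $\ell^2$. A Schur (row-sum) test will then upgrade the crude $\ell^1$ bound \eqref{nonoptimal} to the desired $\ell^2$ bound \eqref{boundonehalfnew}.

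I would control the normalized interaction $M_{\y\mathbf z} := |a(v^\y,v^{\mathbf z})|\,\big(\|v^\y\|_{H^{1/2}_{00}(\patch\y)}\|v^{\mathbf z}\|_{H^{1/2}_{00}(\patch{\mathbf z})}\big)^{-1}$ pairwise. When the patches $\patch\y$ and $\patch{\mathbf z}$ overlap, Cauchy--Schwarz in the $|\cdot|_{1/2,\G}$ inner product gives at once $M_{\y\mathbf z}\le 1$. The crux is the disjoint case: there $v^\y$ and $v^{\mathbf z}$ have disjoint supports, so the two products evaluated at coincident arguments in $(v^\y(t)-v^\y(s))(v^{\mathbf z}(t)-v^{\mathbf z}(s))$ vanish and, by symmetry of the kernel,
\[
a(v^\y,v^{\mathbf z}) = -2\int_{\patch\y}\int_{\patch{\mathbf z}} \frac{v^\y(t)\,v^{\mathbf z}(s)}{|t-s|^n}\,ds\,dt.
\]
Bounding $|t-s|$ from below by $d_{\y\mathbf z}:=\dist(\patch\y,\patch{\mathbf z})$ and factoring gives $|a(v^\y,v^{\mathbf z})|\le 2\,d_{\y\mathbf z}^{-n}\,\|v^\y\|_{L^1(\patch\y)}\|v^{\mathbf z}\|_{L^1(\patch{\mathbf z})}$. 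The injection \eqref{2.2a}, the Poincar\'e inequality \eqref{poincare}, and quasi-uniformity ($|\patch\y|\simeq h^{n-1}$, $\diam{\patch\y}\simeq h$) then yield $\|v^\y\|_{L^1(\patch\y)}\lesssim h^{n/2}\,\|v^\y\|_{H^{1/2}_{00}(\patch\y)}$, so that $M_{\y\mathbf z}\lesssim (h/d_{\y\mathbf z})^{n}$. Combining the two cases, $M_{\y\mathbf z}\lesssim \min\bigl(1,(h/d_{\y\mathbf z})^n\bigr)$.

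It then remains to verify the uniform row-sum bound $\sum_{\mathbf z}\min\bigl(1,(h/d_{\y\mathbf z})^n\bigr)\lesssim 1$ and to invoke Schur's test with weights $a_\y=\|v^\y\|_{H^{1/2}_{00}(\patch\y)}$. Grouping the nodes $\mathbf z$ into shells at geodesic distance $\simeq kh$ from $\y$, quasi-uniformity on the $(n-1)$-dimensional surface $\G$ gives $\lesssim k^{n-2}$ nodes in the $k$-th shell, each with $d_{\y\mathbf z}\gtrsim kh$; here the non-degeneracy assumption comparing geodesic and Euclidean distance is what lets me pass from the combinatorial separation to the Euclidean distance in the kernel. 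The row sum is thus dominated by $\sum_{k\ge1} k^{n-2}\,k^{-n}=\sum_{k\ge1}k^{-2}$, which converges for both $n=2$ and $n=3$. I expect this borderline summability to be the delicate point: the full $n$-th power decay $(h/d_{\y\mathbf z})^n$ --- obtained by exploiting the $L^1$ smallness of \emph{both} factors, rather than a single Cauchy--Schwarz --- is exactly what is needed, as any weaker rate such as $(h/d_{\y\mathbf z})^{n-1}$ would leave the divergent harmonic series $\sum_k k^{-1}$.
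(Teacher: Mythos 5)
Your proposal is correct and follows essentially the same route as the paper's proof: expand the bilinear form, use the vanishing of the coincident-argument products for disjoint patches to get the $L^1$--$L^1$ bound with kernel decay $\dist(\patch{\x},\patch{\y})^{-n}$, convert to $H^{1/2}_{00}$ norms via \eqref{2.2a} and \eqref{poincare}, apply Cauchy--Schwarz for touching patches with the bounded-overlap property, and finish with a symmetric row-sum (Schur) bound. The only difference is cosmetic: you verify the uniform row sum by discrete shell counting (correctly noting where non-degeneracy enters), while the paper compares $h^n/\dist(\patch{\x},\patch{\y})^n$ with $h\int_{\patch{\x}}|s-\y|^{-n}\,ds$ and bounds the resulting integral over $\G\setminus\patch{\y}$.
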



\begin{proof} Let $v \in H^{1/2}(\G)$ satisfy \eqref{decomposition}. We have
	\[
	| v |^2_{1/2,\G}  = (v,v)_{1/2} = \sum_{\y\in\Nodes} \sum_{\x\in\Nodes}(v^{\y},v^{\x})_{1/2}, 
	\]
	where 
	\[(v,w)_{1/2}
	=
	\int_{\G } \,dt \int_{\G} \,ds \frac{(v(s) -v(s))(w(s) - w(t))}{| s - t |^{n} }.
	\]
	Let us bound $(v^\x,v^\y)_{1/2}$ for $\x, \y$ such that $\dist(\patch{\x},\patch{\y}) > 0$. As in such a case, for $s \in  \patch{\y}$ and $t \in \patch{\x}$, we have that $| t - s | \geq \dist(\patch{\x},\patch{\y})$, we can write
	\begin{multline}
		(v^\x,v^\y)_{1/2}
		=
		\int_{\G } \,dt \int_{\G} \,ds \frac{(v^\x(s) -v^\x(t))(v^\y(s) - v^\y(t))}{| s - t |^n} = \\[2mm]
		\int_{\G } \,dt \int_{\G} \,ds  \frac{v^\x(s) v^\y(s)}{| s - t |^n} + \int_{\G } \,dt \int_{\G} \,ds  \frac{v^\x(t) v^\y(t)}{| s - t |^n} - 2 \int_{\G } \,dt \int_{\G} \,ds  \frac{v^\x(t) v^\y(s)}{| s - t |^n}  \\[2mm] =
		- 2 \int_{\G } \,dt \int_{\G} \,ds  \frac{v^\x(t) v^\y(s)}{| s - t |^n} 
		\lesssim 2 \int_{\G } \,dt \int_{\G} \,ds  \frac{| v^\x(t)|\, | v^\y(s)| }{| s - t |^n}\\[2mm] \lesssim 
		\frac{2}{\dist(\patch{\x},\patch{\y})^n}
		\left(\int_{\patch{\x}} | v^\x(t) | \,dt \right) \, \left( \int_{\patch{\y}} | v^\y(s) | \,ds \right) =
		 \frac{2}{\dist(\patch{\x},\patch{\y})^n} \| v^\x \|_{L^1(\patch{\x})}  \| v^\y \|_{L^1(\patch{\y})}
\\[2mm]		\lesssim \frac{2h^{n-1}}{\dist(\patch{\x},\patch{\y})^n}  \| v^\x \|_{0,\patch{\x}} \| v^\y \|_{0,\patch{\y}} 
		\lesssim \frac{2h^n}{\dist(\patch{\x},\patch{\y})^n} \| v^\x \|_{H^{1/2}_{00}(\patch{\x})} \| v^\y \|_{H^{1/2}_{00}(\patch{\y})},
	\end{multline}
	where  we used \eqref{2.2a} to bound the $L^1$ norms with the $L^2$ norms, which we then bound using  \eqref{poincare}. For $\x, \y$ such that $\dist(\patch{\x}, \patch{\y}) = 0$,  we have instead the following bound: 
	\begin{multline*}
		(v^\x,v^\y)_{1/2} = \int_{\patch{\x} \cup \patch{\y} }\,dt \int_{\patch{\x} \cup \patch{\y}} \,ds \frac{(v^\x(s) -v^\x(t))(v^\y(s) - v^\y(t))}{| s - t |^2} \\
		\leq | v^\x |_{1/2.\patch{\x} \cup \patch{\y} } | v^\y |_{1/2,\patch{\x} \cup \patch{\y} } \lesssim \| v^\x \|_{H^{1/2}_{00}(\patch{\x})} \| v^\y \|_{H^{1/2}_{00}(\patch{\y})} \leq  \| v^\x \|^2_{H^{1/2}_{00}(\patch{\x})} + \| v^\y \|^2_{H^{1/2}_{00}(\patch{\y})}.
	\end{multline*}
	 Then we have
	\begin{multline} \label{bound1}
		| v |_{1/2}^2 \leq \sum_{\y\in \Nodes} \sum_{{\x \in \Nodes} \atop{\dist(\patch{\x},\patch{\y}) = 0}} (v^\x ,v^\y)_{1/2} + \sum_{\y\in \Nodes} \sum_{{\x \in \Nodes} \atop{\dist(\patch{\x},\patch{\y}) > 0}} (v^\x ,v^\y)_{1/2} \\[2mm]
		\lesssim
		\sum_{\y\in \Nodes} \sum_{{\x \in \Nodes} \atop{\dist(\patch{\x},\patch{\y}) = 0}} \Big(
		\| v^\x \|^2_{H^{1/2}_{00}(\patch{\x})} + \| v^\y \|^2_{H^{1/2}_{00}(\patch{\y})}
		\Big)
		 +  \sum_{\y\in \Nodes} \sum_{{\x \in \Nodes} \atop{\dist(\patch{\x},\patch{\y}) > 0}} \frac{2h^n}{\dist(\patch{\x},\patch{\y})^n} \| v^\x \|^2_{H^{1/2}_{00}(\patch{\x})},
	\end{multline}
	where we used that, by the shape regularity of the mesh the cardinality of the set $\{\x: \dist(\patch\x,\patch\y) = 0\}$ is uniformly bounded. This also	
	allows to bound the fist term on the right hand side with the square of the right hand side of \eqref{boundonehalfnew}. Let us now bound the second term: we have
	\begin{multline}\label{bound2}
	\sum_{\y\in \Nodes} \, \sum_{{\x \in \Nodes} \atop{\dist(\patch{\x},\patch{\y}) > 0}} \frac{2h^n}{\dist(\patch{\x},\patch{\y})^n} \| v^\x \|^2_{H^{1/2}_{00}(\patch{\x})} = \sum_{\x\in \Nodes}  \| v^\x \|^2_{H^{1/2}_{00}(\patch{\x})} \,  \sum_{{\y \in \Nodes} \atop{\dist(\patch{\x},\patch{\y}) > 0}} \frac{2h^n}{\dist(\patch{\x},\patch{\y})^n}  \\[2mm]
	 \lesssim \Big(\max_{\x \in \Nodes}\,   \sum_{{\y \in \Nodes} \atop{\dist(\patch{\x},\patch{\y}) > 0}} \frac{2h^n}{\dist(\patch{\x},\patch{\y})^n} \Big) \sum_{\x\in \Nodes} \| v^\x \|^2_{H^{1/2}_{00}(\patch{\x})}.
	\end{multline}
	We then need to bound 
	\[
	\max_{\x \in \Nodes} \sum_{{\y \in \Nodes} \atop{\dist(\patch{\x},\patch{\y}) > 0}} \frac{2h^n}{\dist(\patch{\x},\patch{\y})^n}.
	\]
	
We observe that, by the shape regularity of $\Th$, $\dist(\patch\x,\patch\y) > 0$ implies that	
	\(
	\dist(\patch\x,\patch\y) \simeq \dist(\patch\x,\patch\y) + h
	\). 
	 Then, for $s \in \patch\x$ with $\dist(\x,\y) > 0$ we have that
\(
 \dist(\patch\x,\patch\y) \gtrsim | s - \y | 
\),
 so that
	\[
\frac {h^{n}}{\dist(\patch\x,\patch\y)} \simeq h \int_{\patch{\x}} | s - \y |^{-n} \, ds,
	\]
	whence
	\[
\sum_{{\x \in \Nodes} \atop{\dist(\patch{\x},\patch{\y}) > 0}} \frac{2h^n}{\dist(\patch{\x},\patch{\y})^n} \lesssim h \int_{\Gamma\setminus \patch\y} | s - \y |^{-n} \, ds \lesssim 1.
\]
Bounding the second term on the right hand side of \eqref{bound1} by combining this bound with \eqref{bound2},  we obtain \eqref{boundonehalfnew}.
\end{proof}

\begin{remark}
Simply  switching, in  bound \eqref{nonoptimal}, from an $\ell^1$ to an $\ell^2$ norm by the standard norm equivalence results for finite dimensional spaces,  would only yield the weaker bound
	\[
	\vert v \vert_{1/2,\G} \leq  \#(\Nodes)^{1/2} \sqrt{\sum_{\y\in \Nodes} \| v^{\y} \|^2_{H^{1/2}_{00}(\patch{\y})}} \lesssim h^{(1-n)/2}  \sqrt{\sum_{\y\in \Nodes} \| v^{\y} \|^2_{H^{1/2}_{00}(\patch{\y})}} .
	\]
	\end{remark}

\

\NOTE{
\[1 \sim | \patch{\y} | \| \phistar{ \y} \|_{0,\infty} \sim h^{n-1}  \| \phistar{ \y} \|_{0,\infty}, \quad \longrightarrow \| \phistar{\y} \|_{0,\patch{\y}} \sim h^{(n-1/)2} \| \phistar {\y }\|_{0,\infty} \sim h^{1-n} h^{(n-1)/2} \sim h^{1/2 - n/2}
	\]
}

The following Theorem extends the localization results by Faermann \cite{faermann2000localization,faermann2002localization}.

\newcommand{\Phistar}{\Phi^\star}

\begin{theorem}\label{thm:pos} 
	Let $\Phistar = \{\phistar{\y}, \ \y \in \Nodes\} \subset L^2(\G)$ denote a set functions, not necessarily assumed to be linearly independent, satisfying 
	\begin{equation}\label{assumptionphistar}	\int_{\G} \phistar{\y}(s) \,ds= 1, \qquad \supp \phistar{\y} \subseteq \patch{\y} \quad \text{ and }\quad \| \phistar{\y} \|_{0,\patch{\y}} \simeq h^{1/2-n/2}.\end{equation}
	Then, for all $w \in H^{1/2}(\G)$ satisfying
	\[
	\langle w , \phistar{\y} \rangle_\Gamma = 0, \quad \text{ for all }\ \y \in \Nodes,
	\] it holds that
	\begin{equation}\label{boundonehalfextended}
		\| w \|_{1/2,\G}^2 \simeq  \sum_{\y \in \Nodes} | w  |_{1/2,\patch{\y}}^2.
	\end{equation}
\end{theorem}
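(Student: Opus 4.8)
The plan is to prove the two inequalities contained in the equivalence \eqref{boundonehalfextended} separately. The bound $\sum_{\y\in\Nodes}|w|^2_{1/2,\patch\y}\lesssim\|w\|^2_{1/2,\G}$ is the elementary one: each seminorm $|w|^2_{1/2,\patch\y}$ is the integral of the nonnegative kernel $|w(t)-w(s)|^2/|t-s|^n$ over $\patch\y\times\patch\y\subseteq\G\times\G$, and by the shape regularity of $\Th$ every pair $(s,t)$ lies in $\patch\y\times\patch\y$ for only a uniformly bounded number of nodes $\y$. Summing thus reconstructs $|w|^2_{1/2,\G}$ up to a constant, and $|w|^2_{1/2,\G}\le\|w\|^2_{1/2,\G}$.

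The reverse bound $\|w\|^2_{1/2,\G}\lesssim\sum_\y|w|^2_{1/2,\patch\y}$ is the substantial one, and I would obtain it by reducing to Lemma \ref{lem:locposnew}. Let $\{\theta_\y\}_{\y\in\Nodes}$ be the continuous piecewise linear partition of unity associated with the nodes, so that $\sum_\y\theta_\y\equiv 1$ on $\G$, $0\le\theta_\y\le 1$, $\supp\theta_\y\subseteq\patch\y$, $\theta_\y$ vanishes on $\partial\patch\y$, and $\|\nabla\theta_\y\|_\infty\lesssim h^{-1}$. Writing $w=\sum_\y w^\y$ with $w^\y=\theta_\y w\in H^{1/2}_{00}(\patch\y)$, Lemma \ref{lem:locposnew} yields $|w|^2_{1/2,\G}\lesssim\sum_\y\|w^\y\|^2_{H^{1/2}_{00}(\patch\y)}$, so that it remains to estimate each local norm. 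Using the splitting $\theta_\y(t)w(t)-\theta_\y(s)w(s)=\theta_\y(t)(w(t)-w(s))+w(s)(\theta_\y(t)-\theta_\y(s))$ together with the Lipschitz bound on $\theta_\y$ for the seminorm part, and using $\theta_\y(s)\lesssim\dist(s,\partial\patch\y)/h$ to tame the weighted boundary integral appearing in \eqref{onehalfzznorm}, I expect to reach the local estimate
\[
\|\theta_\y w\|^2_{H^{1/2}_{00}(\patch\y)}\lesssim |w|^2_{1/2,\patch\y}+h^{-1}\|w\|^2_{0,\patch\y}.
\]

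At this point the orthogonality hypothesis enters, precisely to supply the missing $L^2$ control: summing the previous estimate over $\y$ (again invoking finite overlap) would leave $\sum_\y\|\theta_\y w\|^2_{H^{1/2}_{00}(\patch\y)}\lesssim\sum_\y|w|^2_{1/2,\patch\y}+h^{-1}\|w\|^2_{0,\G}$, and the last term must be absorbed. The key auxiliary estimate is therefore $\|w\|^2_{0,\G}\lesssim h\sum_\y|w|^2_{1/2,\patch\y}$, which I would prove patchwise. Let $\bar w_\y=|\patch\y|^{-1}\int_{\patch\y}w$ be the mean of $w$ on the patch; since patches are shape regular with $\diam\patch\y\simeq h$, the Poincaré inequality \eqref{poincareavfree} gives $\|w-\bar w_\y\|_{0,\patch\y}\lesssim h^{1/2}|w|_{1/2,\patch\y}$. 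Because $\int_\G\phistar{\y}=1$, the orthogonality $\langle w,\phistar{\y}\rangle_\G=0$ rewrites as $\bar w_\y=-\langle w-\bar w_\y,\phistar{\y}\rangle_\G$, whence by \eqref{2.2a}, Cauchy--Schwarz, the normalization $\|\phistar{\y}\|_{0,\patch\y}\simeq h^{1/2-n/2}$ from \eqref{assumptionphistar} and $|\patch\y|\simeq h^{n-1}$,
\[
\|\bar w_\y\|_{0,\patch\y}=|\bar w_\y|\,|\patch\y|^{1/2}\lesssim\|w-\bar w_\y\|_{0,\patch\y}\,h^{1/2-n/2}\,h^{(n-1)/2}=\|w-\bar w_\y\|_{0,\patch\y}\lesssim h^{1/2}|w|_{1/2,\patch\y}.
\]
Adding the two contributions gives $\|w\|_{0,\patch\y}\lesssim h^{1/2}|w|_{1/2,\patch\y}$, and summing (using that the patches cover $\G$) yields the auxiliary estimate. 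Combining everything, $\sum_\y\|\theta_\y w\|^2_{H^{1/2}_{00}(\patch\y)}\lesssim\sum_\y|w|^2_{1/2,\patch\y}$; and since $\|w\|^2_{1/2,\G}=\|w\|^2_{0,\G}+|w|^2_{1/2,\G}$ with the first summand controlled by the auxiliary estimate together with $h\lesssim 1$ and the second by Lemma \ref{lem:locposnew}, the reverse bound follows.

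I expect the main obstacle to be the local product estimate for $\|\theta_\y w\|^2_{H^{1/2}_{00}(\patch\y)}$, and in particular showing that the weighted boundary term $\diam{\patch\y}^{n-2}\int_{\patch\y}\theta_\y^2 w^2/\dist(s,\partial\patch\y)^{n-1}\,ds$ is bounded by $h^{-1}\|w\|^2_{0,\patch\y}$ uniformly in $n=2,3$. The precise cancellation of the factors $h^{1/2-n/2}$ and $|\patch\y|^{1/2}\simeq h^{(n-1)/2}$ in the orthogonality step is what singles out the scaling imposed in \eqref{assumptionphistar} as the correct one.
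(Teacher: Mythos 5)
Your proof is correct, and its overall skeleton matches the paper's: a localization inequality valid for \emph{every} $w\in H^{1/2}(\G)$, namely $\|w\|^2_{1/2,\G}\lesssim\sum_\y|w|^2_{1/2,\patch\y}+h^{-1}\sum_\y\|w\|^2_{0,\patch\y}$, followed by the use of the orthogonality to $\phistar{\y}$ to absorb the $L^2$ terms. Your orthogonality step (Poincar\'e inequality \eqref{poincareavfree}, the identity $\bar w_\y=-\langle w-\bar w_\y,\phistar{\y}\rangle_\G$, and the exact cancellation $h^{1/2-n/2}h^{(n-1)/2}\simeq 1$) is in substance identical to the paper's, which reaches the same conclusion $\|w\|_{0,\patch{\y}}\lesssim h^{1/2}|w|_{1/2,\patch{\y}}$ by a slightly slicker test-function computation. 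Where you genuinely diverge is in how the localization inequality itself is obtained: the paper simply cites it from Faermann \cite{faermann2000localization,faermann2002localization} (inequality \eqref{hffaermann}), whereas you derive it from Lemma \ref{lem:locposnew} applied to the hat-function decomposition $w=\sum_\y\theta_\y w$, together with the product estimate $\|\theta_\y w\|^2_{H^{1/2}_{00}(\patch\y)}\lesssim|w|^2_{1/2,\patch\y}+h^{-1}\|w\|^2_{0,\patch\y}$. That estimate does hold; it is exactly the computation the paper performs for the claim \eqref{claim} in the proof of Theorem \ref{thm:negativenormlocalization}, with $\theta_\y$ in place of $\b{\y}$: the splitting $\theta_\y(t)w(t)-\theta_\y(s)w(s)=\theta_\y(t)(w(t)-w(s))+w(s)(\theta_\y(t)-\theta_\y(s))$ handles the seminorm part via $\int_{\patch\y}|s-t|^{2-n}\,ds\lesssim h$, and $\theta_\y(s)\lesssim h^{-1}\dist(s,\partial\patch\y)$ tames the weighted boundary integral in \eqref{onehalfzznorm}, both steps valid precisely for $n=2,3$. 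What your route buys is self-containedness: Theorem \ref{thm:pos} no longer rests on an external citation, and it reuses both Lemma \ref{lem:locposnew} (which the paper states but then only exploits for the dual $H^{-1/2}$ result) and the product-estimate machinery the paper develops anyway in Section 4. The price is that you must actually carry out that product estimate --- your write-up leaves it at the level of ``I expect to reach'', so a complete version should execute the two bounds above --- while the paper gets the corresponding inequality for free from the literature.
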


\begin{proof}
	We know (see \cite{faermann2000localization,faermann2002localization}) that for all $w \in H^{1/2} (\G)$ it holds that
	\begin{equation}\label{hffaermann}
	\| w \|^2_{1/2,\G} \lesssim \sum_{\y \in \Nodes} | w |^2_{1/2,\patch{\y}} + \sum_{\y\in \Nodes} h^{-1} \| w \|^2_{0,\patch{\y}}.
	\end{equation}
	Now, letting $ \bar w = | \patch{\y} |^{-1} \int_{\patch{\y}} w(s)\, ds$ denote the average of $w$ on $\tau$, we have
	\begin{multline*}
		\| w \|_{0,\patch{\y}}^2 = \langle w\, ,\, w - 
		| \patch{\y} | \bar w \, 		\phistar{\y} \rangle_{\patch{\y} }  = \langle  w - \bar w,
		w -
		 | \patch{\y} | \bar w \, 		\phistar{\y} \rangle_{\patch{\y}} \\[2mm]
		\leq \| w  - \bar w \|_{0,\patch{y}} \| w -
	| \patch{\y} | \bar w \, 		\phistar{\y} \|_{0,\patch{\y}} 
		\lesssim h^{1/2} | w  |_{1/2,\patch{\y}} \Big (\| w \|_{0,\patch{\y}} + 	{|\patch{\y} |\, | \bar w |} \, \| \phistar{\y} \|_{0,\patch{y}} \Big)\\[2mm]
		\lesssim h^{1/2} | w  |_{1/2,\patch{\y}}   \| w \|_{0,\patch{\y}}  (1 + | \patch{\y} | ^{1/2}\,h^{1/2-n/2} ),
	\end{multline*}
	whence, as $| \patch\y | \simeq h^{n-1}$, 
	\[
	\| w \|_{0,\patch{\y}} \lesssim h^{1/2} | w  |_{1/2,\patch{\y}}.
	\]
Using this bound  in \eqref{hffaermann} gives us \eqref{boundonehalfextended},
	where the second inequality (i.e. the upper bound of the sum on the right hand side by a constant times the left hand side) immediately descends from the definition of the $H^{1/2}(\G)$ norm, thanks, once again, to the observation that, by the shape regularity of the mesh $\Th$, the number of patches containing a given triangle/tetrahedron is uniformly bounded.
\end{proof}

A particularly relevant application of the previous theorem is given by the following corollary.

\begin{corollary}\label{cor:pos} Let $\Pi : H^{1/2}(\G) \to H^{1/2}(\G)$ be a (possibly oblique) bounded projector. Assume that there exists a not necessarily linearly independent set \  $\Phistar = \{
	\phistar{\y}, \ \y \in \Nodes
	\} \subset L^2(\G)$ satisfying \eqref{assumptionphistar}, and such that $\Pi$ is orthogonal to $\Span \Phistar$, that is such that
	\[\langle v - \Pi(v)\, ,\, \phistar{\y} \rangle_\G = 0, \qquad \text{ for all }\y \in \Nodes.\]
	Then we have the following norm equivalence, valid for all $v \in H^{1/2}(\G)$:
	\begin{equation}\label{norm_equivalence}
	\| v \|_{1/2,\G}^2 \simeq \| \Pi(v) \|_{1/2,\G}^2 + \sum_{\y\in \Nodes} | v - \Pi(v)  |_{1/2,\patch{\y}}^2.
	\end{equation}
\end{corollary}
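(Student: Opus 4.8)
The plan is to apply Theorem \ref{thm:pos} to the residual $w = v - \Pi(v)$ and then to recombine. First I would observe that $w \in H^{1/2}(\G)$, since both $v$ and $\Pi(v)$ lie in $H^{1/2}(\G)$, and that $w$ satisfies the orthogonality hypothesis of Theorem \ref{thm:pos}: this is exactly the assumed relation $\langle v - \Pi(v), \phistar{\y} \rangle_\G = 0$ for all $\y \in \Nodes$, i.e. $w$ is $L^2(\G)$-orthogonal to $\Span \Phistar$. As the set $\Phistar$ is assumed to satisfy \eqref{assumptionphistar}, Theorem \ref{thm:pos} applies verbatim to $w$ and yields
\[
\| v - \Pi(v) \|_{1/2,\G}^2 \simeq \sum_{\y\in\Nodes} | v - \Pi(v) |_{1/2,\patch{\y}}^2,
\]
with equivalence constants independent of $h$.

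It then remains to establish the splitting equivalence
\[
\| v \|_{1/2,\G}^2 \simeq \| \Pi(v) \|_{1/2,\G}^2 + \| v - \Pi(v) \|_{1/2,\G}^2,
\]
after which \eqref{norm_equivalence} follows by substituting the previous display into its last term. For the upper bound $\lesssim$ I would write $v = \Pi(v) + (v - \Pi(v))$, apply the triangle inequality for $\| \cdot \|_{1/2,\G}$ and square, using the elementary estimate $(a+b)^2 \leq 2(a^2+b^2)$. For the reverse bound $\gtrsim$ I would invoke the boundedness of $\Pi$, which gives $\| \Pi(v) \|_{1/2,\G} \lesssim \| v \|_{1/2,\G}$, and the consequent boundedness of $I - \Pi$, which gives $\| v - \Pi(v) \|_{1/2,\G} \lesssim \| v \|_{1/2,\G}$; adding the two squared inequalities yields the claim.

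The corollary is thus essentially a repackaging of Theorem \ref{thm:pos}, and no genuine obstacle arises, the idempotency of $\Pi$ not even being needed since the orthogonality is assumed directly for the residual. The single point deserving attention is that the constants in the splitting equivalence involve the operator norm of $\Pi$ on $H^{1/2}(\G)$; consequently the equivalence \eqref{norm_equivalence} retains $h$-independent constants precisely when this operator norm is bounded uniformly in $h$, a property that one must keep in mind in the intended applications, where $\Pi$ is typically replaced by an $h$-dependent family of projectors used for preconditioning or stabilization.
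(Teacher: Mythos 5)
Your proof is correct and is exactly the intended derivation: the paper states Corollary \ref{cor:pos} without proof, as an immediate application of Theorem \ref{thm:pos}, and your argument --- applying the theorem to the residual $w = v - \Pi(v)$, which satisfies its orthogonality hypothesis by assumption, and then combining with the splitting equivalence $\| v \|_{1/2,\G}^2 \simeq \| \Pi(v) \|_{1/2,\G}^2 + \| v - \Pi(v) \|_{1/2,\G}^2$ obtained from the triangle inequality and the boundedness of $\Pi$ and $I - \Pi$ --- is precisely how that gap is meant to be filled. Your closing caveat is also well taken: the hidden constants involve the $H^{1/2}(\G)$ operator norm of $\Pi$, so in applications where $\Pi$ is an $h$-dependent family of projectors (e.g.\ the $L^2$ projection onto piecewise linears), the $h$-uniform $H^{1/2}$-stability of that family is exactly the hypothesis one must verify, which is why the paper cites the stability results of \cite{crouzeix1987stability,bramble2002stability,carstensen2002merging}.
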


Remark that, under suitable shape regularity conditions on the mesh (see \cite{crouzeix1987stability,bramble2002stability,carstensen2002merging}), the $L^2$ projection on the space $L_h$ of continuous piecewise linears on $\Th$ satisfies the assumptions of Corollary \ref{cor:pos}. A similar result also holds for
 the oblique projection on  $L_h$ 
 taken orthogonally to the space of discontinuous piece wise constants on the polygonal Voronoi mesh dual to $\Th$ \cite{steinbach2002generalized}. Indeed it is not difficult to check that in the proof of Theorem \ref{thm:pos}, the triangular mesh $\Th$ in the definition of the set $\Phi$ can be replaced by its dual mesh.

%
%
%
%
%

\newcommand{\tphi}[1]{\widetilde\phi_{#1}}

\newcommand{\tPi}{\widetilde \Pi}
\renewcommand{\v}{\mathbf{v}}

\section{Localization of the $H^{-1/2}(\G)$ seminorm}

We aim at proving a localization result similar to Theorem \ref{thm:pos} for the dual space $H^{-1/2}(\G)$. We start be proving the following Proposition.

\begin{proposition}
	For all $\phi \in H^{-1/2}(\G)$
	it holds that
	\[
	\sum_{\y \in \Nodes} \| \phi \|^2_{\tH^{-1/2}(\patch\y)} \lesssim \| \phi  \|^2_{-1/2,\G}.
	\]
\end{proposition}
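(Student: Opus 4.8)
The plan is to argue by duality: I would test $\phi$ against a carefully chosen superposition of near-optimal functions for the individual local dual norms, and then control the $H^{1/2}(\G)$ norm of that superposition by means of Lemma \ref{lem:locposnew}. Set $a_\y = \| \phi \|_{\tH^{-1/2}(\patch\y)}$. Since every $v \in H^{1/2}_{00}(\patch\y)$ extends by zero to an element of $H^{1/2}(\G)$, one has $\langle \phi, v \rangle_{\patch\y} = \langle \phi, v \rangle_\G$, and for each $\y$ and any $\varepsilon > 0$ there is $v_\y \in H^{1/2}_{00}(\patch\y)$ with $\| v_\y \|_{H^{1/2}_{00}(\patch\y)} = 1$ and $\langle \phi, v_\y \rangle_\G \geq (1-\varepsilon) a_\y$. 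Setting $w = \sum_{\y \in \Nodes} a_\y v_\y \in H^{1/2}(\G)$, this gives
\[
(1-\varepsilon) \sum_{\y \in \Nodes} a_\y^2 \leq \sum_{\y \in \Nodes} a_\y \langle \phi, v_\y \rangle_\G = \langle \phi, w \rangle_\G \leq \| \phi \|_{-1/2,\G}\, \| w \|_{1/2,\G}.
\]

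It then remains to show $\| w \|_{1/2,\G} \lesssim \big( \sum_{\y} a_\y^2 \big)^{1/2}$, which is the crux of the argument. The seminorm is handled directly by Lemma \ref{lem:locposnew}: since $w = \sum_\y (a_\y v_\y)$ is a decomposition of the form \eqref{decomposition}, we obtain $| w |_{1/2,\G}^2 \lesssim \sum_\y \| a_\y v_\y \|^2_{H^{1/2}_{00}(\patch\y)} = \sum_\y a_\y^2$. For the $L^2$ part I would invoke the finite overlap of the patches (uniformly bounded by the shape regularity of $\Th$) to write $\| w \|_{0,\G}^2 \lesssim \sum_\y a_\y^2 \| v_\y \|_{0,\patch\y}^2$, and then bound each factor through the Poincar\'e inequality \eqref{poincare}, namely $\| v_\y \|_{0,\patch\y} \lesssim \diam{\patch\y}^{1/2} \| v_\y \|_{H^{1/2}_{00}(\patch\y)} \simeq h^{1/2}$; since $h \lesssim 1$ this yields $\| w \|_{0,\G}^2 \lesssim \sum_\y a_\y^2$ as well.

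Combining the two estimates gives $\sum_\y a_\y^2 \lesssim \| \phi \|_{-1/2,\G} \big( \sum_\y a_\y^2 \big)^{1/2}$ after letting $\varepsilon \to 0$, and dividing by $\big( \sum_\y a_\y^2 \big)^{1/2}$ (the sum being finite since $\Nodes$ is finite, and the inequality being trivial when it vanishes) yields the claim.

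The main obstacle is precisely the control of $\| w \|_{1/2,\G}$: a naive triangle inequality over the overlapping patches would lose a factor growing with $\#(\Nodes)$, as recorded in the Remark following Lemma \ref{lem:locposnew}. It is exactly the superposition estimate of that lemma that lets one pass from the $\ell^1$-type bound to the $\ell^2$-type bound without such a loss, and this is what makes the duality argument close with a constant independent of $h$.
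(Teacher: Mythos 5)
Your proof is correct and follows essentially the same route as the paper: the paper's proof is exactly this duality argument, phrased as a chain of supremum manipulations (bounding the $\ell^2$ sum of the local dual norms by a supremum over vectors $\v$ of local test functions, summing them into a single element of $H^{1/2}(\G)$, and closing with Lemma \ref{lem:locposnew}). The only differences are cosmetic ones in your favor: you work with explicit near-maximizers instead of suprema, and you spell out the $L^2$-part estimate (finite overlap of the patches plus the Poincar\'e inequality \eqref{poincare}) that the paper leaves implicit when it replaces $\sqrt{\sum_{\y} \| v^\y \|^2_{H^{1/2}_{00}(\patch{\y})}}$ by $\| \sum_{\y} v^\y \|_{1/2,\G}$, even though Lemma \ref{lem:locposnew} only controls the seminorm.
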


\begin{proof} Using Lemma \ref{lem:locposnew}, and denoting by $\v = (v_\y)_{\y\in \Nodes} \in \prod_{\y\in \Nodes} H^{1/2}_{00} (\patch{\y})$  the vector of functions whose entries are the $v_\y$s, we can write
	\begin{multline*}
	\left(\sum_{\y \in \Nodes} \| \phi \|^2_{\tH^{-1/2}(\patch\y)}\right)^{1/2} = 
		\left(\sum_{\y \in \Nodes}\, \left(\sup_{{v^\y \in H^{1/2}_{00}(\patch{\y})}\atop {v^ \y \not= 0}} \frac{\langle \phi,v^\y \rangle }{\| v^\y \|_{H^{1/2}_{00}(\patch{\y})}} \right)^2\right)^{1/2}\\[2mm] \lesssim\sup_{{\v \in \prod_{\y} H^{1/2}_{00}(\patch{\y})}\atop {\v \not = 0}} \frac{ \sum_{\y} \langle \phi,v^\y \rangle }{\sqrt {\sum_{\y} \| v^\y \|^2_{H^{1/2}_{00}(\patch{\y})}}} 
		=
		\sup_{{\v \in \prod_{\y} H^{1/2}_{00}(\patch{\y})}\atop {\v \not = 0}} \frac{  \langle \phi, \sum_{\y} v^\y \rangle }{\sqrt {\sum_{\y} \| v^\y \|^2_{H^{1/2}_{00}(\patch{\y})}}} \lesssim 
	\\[2mm]	\sup_{{\v \in \prod_{\y} H^{1/2}_{00}(\patch{\y})}\atop {\v \not = 0}} \frac{  \langle \phi, \sum_{\y} v^\y \rangle }{ \|{\sum_{\y} v^\y \|_{1/2,\G}}} \leq \sup_{{v \in H^{1/2}(\G)}\atop{v \not= 0}} \frac{  \langle \phi, v \rangle }{ \| v \|_{1/2,\G}} = \| \phi  \|_{-1/2,\G},
	\end{multline*}
which is the desired bound.
\end{proof}

We can now prove the following theorem.
\begin{theorem}\label{thm:negativenormlocalization} Let  $\Phi = \{\b{\y},\ \y \in \Nodes\} \subset H^{1/2}(\G)$ be a set of functions satisfying, for all $\y \in \Nodes$, $\b{\y} \in W^{1,\infty}_0(\patch{\y})$, with
	\[ 
	\| \b{\y} \|_{0,\infty,\patch\y} \lesssim 1, \quad | \phi^\y |_{1,\infty,\patch\y} \lesssim h^{-1}, \qquad \text{ and such that }\quad \sum_{\y \in \Nodes} \b{\y} = 1.\]  Then 
for all $\zeta \in H^{-1/2}(\G)$ satisfying	\[\langle \zeta , \b{\y} \rangle = 0, \qquad \text{ for all }\quad \y \in \Nodes,\]
it holds that
	\begin{equation}\label{boundnegativenorm}
	\| \zeta \|_{-1/2,\G}^2  \simeq \sum_{\y \in \Nodes} \| \zeta \|^2_{\tH^{-1/2}(\patch\y)}.
	\end{equation}
\end{theorem}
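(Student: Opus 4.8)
The lower bound $\sum_{\y\in\Nodes}\|\zeta\|^2_{\tH^{-1/2}(\patch\y)}\lesssim\|\zeta\|^2_{-1/2,\G}$ is exactly the content of the Proposition just proved, and holds without any orthogonality assumption. The plan is therefore to concentrate on the reverse inequality $\|\zeta\|^2_{-1/2,\G}\lesssim\sum_{\y\in\Nodes}\|\zeta\|^2_{\tH^{-1/2}(\patch\y)}$, which is where the condition $\langle\zeta,\b\y\rangle=0$ enters. Starting from the definition $\|\zeta\|_{-1/2,\G}=\sup_{v\neq 0}\langle\zeta,v\rangle/\|v\|_{1/2,\G}$, I would fix an arbitrary $v\in H^{1/2}(\G)$ and exploit the partition of unity $\sum_\y\b\y=1$ to split $\langle\zeta,v\rangle=\sum_\y\langle\zeta,\b\y v\rangle$.

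Here the orthogonality is used to subtract local averages at no cost: since $\langle\zeta,\b\y\rangle=0$, for every node $\y$ and every constant $c_\y$ one has $\langle\zeta,\b\y v\rangle=\langle\zeta,\b\y(v-c_\y)\rangle$. Choosing $c_\y=\bar v^\y:=|\patch\y|^{-1}\int_{\patch\y}v(s)\,ds$, and noting that $\b\y(v-\bar v^\y)\in H^{1/2}_{00}(\patch\y)$ because $\b\y\in W^{1,\infty}_0(\patch\y)$ vanishes on $\partial\patch\y$, each term is bounded by duality, $|\langle\zeta,\b\y(v-\bar v^\y)\rangle|\leq\|\zeta\|_{\tH^{-1/2}(\patch\y)}\|\b\y(v-\bar v^\y)\|_{H^{1/2}_{00}(\patch\y)}$. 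A Cauchy--Schwarz inequality over $\y$ then gives $|\langle\zeta,v\rangle|\leq(\sum_\y\|\zeta\|^2_{\tH^{-1/2}(\patch\y)})^{1/2}(\sum_\y\|\b\y(v-\bar v^\y)\|^2_{H^{1/2}_{00}(\patch\y)})^{1/2}$, so it remains to prove the local estimate $\sum_\y\|\b\y(v-\bar v^\y)\|^2_{H^{1/2}_{00}(\patch\y)}\lesssim\|v\|^2_{1/2,\G}$.

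This local estimate is the crux of the argument. For a fixed patch I would use the characterization \eqref{onehalfzznorm} and treat the seminorm part via the product rule $\b\y(t)(v-\bar v^\y)(t)-\b\y(s)(v-\bar v^\y)(s)=\b\y(t)\big((v-\bar v^\y)(t)-(v-\bar v^\y)(s)\big)+(v-\bar v^\y)(s)\big(\b\y(t)-\b\y(s)\big)$: the first piece contributes $\|\b\y\|^2_{0,\infty,\patch\y}|v|^2_{1/2,\patch\y}\lesssim|v|^2_{1/2,\patch\y}$, while the second, using $|\b\y(t)-\b\y(s)|\lesssim h^{-1}|t-s|$ together with $\int_{\patch\y}|t-s|^{2-n}\,dt\simeq h$, contributes $\lesssim h^{-1}\|v-\bar v^\y\|^2_{0,\patch\y}$. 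For the boundary (Hardy) term in \eqref{onehalfzznorm} I would use that $\b\y$ vanishes on $\partial\patch\y$ with $|\b\y(s)|\lesssim h^{-1}\dist(s,\partial\patch\y)$, which in both dimensions $n=2,3$ reduces it to $\lesssim h^{-1}\|v-\bar v^\y\|^2_{0,\patch\y}$ as well. The zero-average Poincar\'e inequality \eqref{poincareavfree} then yields $\|v-\bar v^\y\|^2_{0,\patch\y}\lesssim h\,|v|^2_{1/2,\patch\y}$, so that $h^{-1}\|v-\bar v^\y\|^2_{0,\patch\y}\lesssim|v|^2_{1/2,\patch\y}$ and hence $\|\b\y(v-\bar v^\y)\|^2_{H^{1/2}_{00}(\patch\y)}\lesssim|v|^2_{1/2,\patch\y}$. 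Summing over $\y$ and invoking the finite-overlap property of the patches (shape regularity of $\Th$) gives $\sum_\y|v|^2_{1/2,\patch\y}\lesssim|v|^2_{1/2,\G}\leq\|v\|^2_{1/2,\G}$, which closes the estimate and, taking the supremum over $v$, establishes the upper bound.

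The main obstacle I anticipate is the careful bookkeeping of the scalings in the local estimate, in particular verifying that the Hardy boundary term in the $H^{1/2}_{00}$ norm is controlled uniformly in $h$ for both $n=2$ and $n=3$; the cancellation that makes everything work is the interplay between the factor $\diam{\patch\y}^{n-2}$ in \eqref{onehalfzznorm}, the bound $|\b\y(s)|\lesssim h^{-1}\dist(s,\partial\patch\y)$, and the Poincar\'e gain of one power of $h^{1/2}$.
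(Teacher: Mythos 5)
Your proposal is correct and follows essentially the same route as the paper's own proof: lower bound from the preceding Proposition, upper bound via the partition of unity $\sum_{\y}\b{\y}=1$, the orthogonality $\langle \zeta,\b{\y}\rangle=0$ to replace $\b{\y}v$ by $\b{\y}(v-\bar v^{\y})$, duality plus Cauchy--Schwarz over the nodes, and the key local estimate $\|\b{\y}(v-\bar v^{\y})\|_{H^{1/2}_{00}(\patch{\y})}\lesssim |v|_{1/2,\patch{\y}}$ proved by exactly the paper's splitting (product rule for the seminorm part, the Lipschitz decay $|\b{\y}(s)|\lesssim h^{-1}\dist(s,\partial\patch{\y})$ for the Hardy term, and the Poincar\'e inequality \eqref{poincareavfree}). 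The concluding summation using the finite-overlap property of the patches also matches the paper, so there is nothing to add.
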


\

\newcommand{\p}{\phi^\y}
\newcommand{\w}{w^\y}

\newcommand{\mapy}{\Phi^{\y}}
\newcommand{\hp}{\widehat{\phi}}
\newcommand{\hv}{\widehat v}
\begin{proof}
Let $\zeta \in H^{-1/2}(\G)$ 
with, for all $\y \in \Nodes$, $\langle \zeta,\b{\y}\rangle = 0$, 
and, for $v \in H^{1/2}(\G)$, let $\bar v^\y =| \patch{\y} |^{-1} \int_{\patch{\y} }v(s)\,ds$ denote the average on $\patch{\y}$ of $v$. We have $\p(v^\y -\bar v^\y ) \in H^{1/2}_{00} (\patch{y})$ and we claim that 
\begin{equation}\label{claim}
\| \p(v^\y -\bar v^\y ) \|_{H^{1/2}_{00}(\patch{y})} \lesssim | v |_{1/2,\patch{y}}.
\end{equation}
Then can write
\begin{multline*}
\| \zeta  \|_{-1/2,\G} = \sup_{{v \in H^{1/2}(\G)}\atop{v \not= 0}} \frac{\langle \zeta  , v\rangle_\G}{\| v \|_{1/2,\G} } 
= \sup_{{v \in H^{1/2}(\G)}\atop{v \not= 0}}  \frac{\langle \zeta  , \sum_{\y} \phi^\y (v - \bar v^\y) \rangle_\G}{\| v \|_{1/2,\G} } \\[2mm]
\leq 
\sup_{{v \in H^{1/2}(\G)}\atop{v \not= 0}}  \frac{ \sum_{\y}  \| \zeta  \|_{\tH^{-1/2}(\patch\y)}
\| \phi^\y (v - \bar v^\y) \|_{H^{1/2}_{00}(\patch\y)}} 
{\| v \|_{1/2,\G}}\\
 \lesssim \sqrt{\sum_\y \| \zeta  \|^2_{\tH^{-1/2}(\patch\y)}}
\sup_{{v \in H^{1/2}(\G)}\atop{v \not= 0}} 
\frac{
\sqrt{\sum_\y \|  \phi^\y (v - \bar v^\y) \|^2_{H^{1/2}_{00}(\patch\y)}} 
}
{
\| v \|_{1/2,\G}
}\\ \lesssim \sqrt{\sum_\y \| \zeta  \|^2_{\tH^{-1/2}(\patch\y)}} \sup_{{v \in H^{1/2}(\G)}\atop{v \not= 0}} 
\frac{
\sqrt{\sum_\y | v |^2_{1/2,\patch\y} }
}
{
\| v \|_{1/2,\G}, 
} \lesssim \sqrt{\sum_\y \| \zeta  \|^2_{\tH^{-1/2}(\patch\y)}}.
\end{multline*}

We then need to show that\eqref{claim} holds.
Letting $\w  = v^\y - \bar v^\y$ and using  \eqref{onehalfzznorm} we have
\begin{multline*}
\| \p\w \|_{H^{1/2}_{00}(\patch\y)} \lesssim\underbrace{ \int_{\patch\y}
\,ds \int_{\patch\y}\,dt  \frac{| \p(s) \w(s) - \p(t) \w(t)|^2}{| s - t |^n} }_I	+ \underbrace{h^{n-2} \int_{\patch\y} \frac{|\p(s)\w(s)|^2}{\dist(s,\partial\patch\y)^{n-1}}\,ds}_{II}.
	\end{multline*}

%
%
To bound $I$ we observe that, adding and subtracting $\p (s) \w(t)$ we can write
\begin{multline*}
I \lesssim 
	\int_{\patch\y}
\,ds \int_{\patch\y}\,dt  \frac{| \p(s) |^2 |\w(s) - \w(t) |^2 }{| s - t |^n}
+
	\int_{\patch\y}
\,ds \int_{\patch\y}\,dt  \frac{  |\p(s) - \p(t) |^2 | \w(t)|^2}{| s - t |^n}
	\\[2mm] \lesssim
\| \p \|_{0,\infty,\patch\y}^2 | \w |^2_{1/2,\patch\y}
	+
\| \p \|^2_{1,\infty,\patch\y}	 \int_{\patch\y}\,dt  | \w(t)|^2 \int_{\patch\y}
\,ds\, {| s - t |^{2-n}}
	\\[2mm] \lesssim
 | \w |^2_{1/2,\patch\y} + h^{-1}  \| \w \|_{0,\patch\y}^2 \lesssim 
	 | \w |_{1/2,\patch\y}^2,
	\end{multline*}
where we used the bound $|\p(s) - \p(t) | \leq \| \p \|_{1,\infty,\patch\y} | s - t |$, the 
fact that, for $t \in \patch\y$ we have that $\int_{\patch\y} | s - t |^{2-n} \,ds \lesssim h$ (this holds for $n=2,3$), and the Poincar\'e inequality \eqref{poincareavfree}.

\NOTE{ For $n = 2$, $\int_{\patch\y} | s - t |^{2-n} \,ds = | \patch{\y} | \lesssim h$. For $n = 3$, letting for simplicity $t = (0,0)$ we have
\[
\int_{\patch\y} | s - t |^{-1} \,ds \leq \int_B(t,ch) | s - t |^{-1} \,ds  = \int_0^{2\pi} \,d\theta \int_{0}^{ch} \rho \,d\rho \,| \rho |^{-1} = 
2\pi \int_0^{ch} 1 \,d\rho \lesssim h. 
\]
}

To bound $II$, we let $\sigma \in \partial \patch\y$ be such that $\dist(s,\patch\y) = | s-\sigma |$, and we can write
\[
| \p(s) | = | \p(s)- \p(\sigma) | \leq \| \p \|_{1,\infty,\patch\y} | s - \sigma |  = \| \p \|_{1,\infty,\patch\y} d(s,\partial\patch\y),
\]
and then
\[\frac{| \p(s) |^2}{\dist(s,\partial\patch\y)^{n-1}} \leq \| \p \|^2_{1,\infty,\patch\y} \dist(s,\patch\y)^{3-n} \lesssim h^{-2} h^{3-n}.
\]
Then we have 
\begin{multline*}
	h^{n-2} \int_{\patch\y} \frac{|\p(s)\w(s)|^2}{\dist(s,\partial\patch\y)^{n-1}}\,ds \lesssim h^{n-2}\, \sup_{s \in \patch\y} \frac{| \p(s) |^2}{\dist(s,\partial\patch\y)^{n-1}}	\| \w \|_{0,\patch\y}^2\\[2mm] \lesssim h^{n-2} h^{1-n} \| \w \|_{0,\patch\y}^2 \lesssim | \w |_{1/2\patch\y}^2,
\end{multline*}
finally yielding
\[
\| \p \w \|_{H^{1/2}_{00}(\patch\y) }^2 \lesssim | \w |_{1/2,\patch\y}^2 = | v |_{1/2,\patch\y}^2,
\]
which gives us \eqref{claim} and, hence, finally, \eqref{boundnegativenorm}.
\end{proof}

Also here we can exploit the localization result given by Theorem \ref{thm:negativenormlocalization} to derive equivalent norms, this time  for $H^{-1/2}(\G)$. We have the following corollary.

\begin{corollary}\label{cor:equivnegative} Let $\tPi : H^{-1/2}(\G) \to H^{-1/2}(\G)$ be a bounded projector satisfying the following assumption: there exist functions  $\b{\y}\in W_0^{1,\infty}(\patch\y)$, $\y \in \Nodes$, with
	\[\sum_{\y \in \Nodes} \b{\y} = 1, \qquad\| \b{\y} \|_{0,\infty,\patch\y} \lesssim 1, \qquad | \phi^\y |_{1,\infty,\patch\y} \lesssim h^{-1},\]  such that 
	for all $\zeta \in H^{-1/2}(\G)$,	\[\langle \zeta - \tPi(\zeta), \b{\y} \rangle = 0.\]
	Then, for all $\zeta \in H^{-1/2}(\G)$
	\[
\| \zeta \|_{-1/2,\G}  \simeq \| \tPi \zeta \|_{-1/2,\G}^2 + \sum_{\y \in \Nodes} \| \zeta - \tPi \zeta \|^2_{\tH^{-1/2}(\patch\y)}.
	\]
\end{corollary}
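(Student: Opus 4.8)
The plan is to exploit the splitting $\zeta = \tPi\zeta + (\zeta - \tPi\zeta)$ together with the observation that the hypotheses are arranged precisely so that Theorem \ref{thm:negativenormlocalization} applies to the second summand. Indeed, the functions $\b{\y}$ are assumed to satisfy exactly the requirements of that theorem ($\b{\y}\in W^{1,\infty}_0(\patch\y)$, $\|\b{\y}\|_{0,\infty,\patch\y}\lesssim 1$, $|\phi^\y|_{1,\infty,\patch\y}\lesssim h^{-1}$, and $\sum_\y\b{\y}=1$), and the orthogonality assumption $\langle \zeta-\tPi\zeta,\b{\y}\rangle=0$ for all $\y\in\Nodes$ says that $\zeta-\tPi\zeta$ is an admissible argument. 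Applying the theorem to $\zeta-\tPi\zeta$ in place of $\zeta$ therefore yields the two-sided bound
\[
\|\zeta-\tPi\zeta\|^2_{-1/2,\G} \simeq \sum_{\y\in\Nodes}\|\zeta-\tPi\zeta\|^2_{\tH^{-1/2}(\patch\y)},
\]
and the whole corollary will follow by combining this with the triangle inequality and the boundedness of $\tPi$. (Note that, for the homogeneity of the two sides to match, the left-hand side of the asserted equivalence should read $\|\zeta\|^2_{-1/2,\G}$.)

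For the first inequality I would use the triangle inequality in $H^{-1/2}(\G)$ and the elementary estimate $(a+b)^2\le 2a^2+2b^2$ to write
\[
\|\zeta\|^2_{-1/2,\G} \le 2\|\tPi\zeta\|^2_{-1/2,\G} + 2\|\zeta-\tPi\zeta\|^2_{-1/2,\G},
\]
and then replace the last term by means of the ``$\lesssim$'' half of the displayed equivalence. This gives
\[
\|\zeta\|^2_{-1/2,\G} \lesssim \|\tPi\zeta\|^2_{-1/2,\G} + \sum_{\y\in\Nodes}\|\zeta-\tPi\zeta\|^2_{\tH^{-1/2}(\patch\y)},
\]
which is one of the two bounds defining ``$\simeq$''.

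For the reverse direction I would estimate the two terms on the right-hand side separately. Since $\tPi$ is bounded on $H^{-1/2}(\G)$, we have $\|\tPi\zeta\|_{-1/2,\G}\lesssim\|\zeta\|_{-1/2,\G}$ at once. For the localized sum I would invoke either the displayed equivalence again or, more directly, the Proposition preceding the theorem, in the form $\sum_\y\|\phi\|^2_{\tH^{-1/2}(\patch\y)}\lesssim\|\phi\|^2_{-1/2,\G}$ applied to $\phi=\zeta-\tPi\zeta$, and then bound $\|\zeta-\tPi\zeta\|_{-1/2,\G}\lesssim\|\zeta\|_{-1/2,\G}$ using once more the boundedness of $\tPi$ (equivalently, of the complementary projector $\zeta\mapsto\zeta-\tPi\zeta$). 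Adding the two estimates yields
\[
\|\tPi\zeta\|^2_{-1/2,\G} + \sum_{\y\in\Nodes}\|\zeta-\tPi\zeta\|^2_{\tH^{-1/2}(\patch\y)} \lesssim \|\zeta\|^2_{-1/2,\G},
\]
which is the remaining bound.

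This argument is essentially bookkeeping once Theorem \ref{thm:negativenormlocalization} and the Proposition are available, so I do not anticipate a genuine obstacle. The only two points requiring care are, first, verifying that the assumptions placed on the $\b{\y}$ coincide exactly with those of the theorem, so that it may be applied to $\zeta-\tPi\zeta$; and second, the quantitative use of the boundedness of $\tPi$, which is precisely what keeps the implicit constants from degenerating and hence what the hypothesis ``$\tPi$ bounded'' is there to supply.
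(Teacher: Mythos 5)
Your proposal is correct and is precisely the argument the paper intends: the paper states Corollary \ref{cor:equivnegative} without proof as an immediate consequence of Theorem \ref{thm:negativenormlocalization} (applied to $\zeta-\tPi\zeta$, which the orthogonality hypothesis makes admissible), combined with the Proposition, the triangle inequality, and the boundedness of $\tPi$ and of $I-\tPi$ --- exactly the bookkeeping you carry out. Your side remark is also right: the left-hand side of the stated equivalence should read $\| \zeta \|^2_{-1/2,\G}$ for the two sides to scale consistently.
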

Both the $L^2$ projection on the space of continuous piecewise linears and the oblique projection onto the space of piecewise constants on the dual mesh  orthogonally to the space of continuous piecewise linears satisfy the assumptions of Corollary \ref{cor:equivnegative}.

\bibliographystyle{plain}
\bibliography{biblio}

\end{document}